\documentclass[12pt]{article}
\usepackage[english]{babel}
\usepackage[tbtags]{amsmath}
\usepackage{amsfonts,amssymb,mathrsfs,amsmath,amscd,comment}

\usepackage{graphicx}
\graphicspath{{pictures/}} \DeclareGraphicsExtensions{.pdf,.png,.jpg}

\usepackage{amssymb}
\usepackage{amsthm}
\usepackage{amsmath}

\usepackage{todonotes}

\numberwithin{equation}{section}

\theoremstyle{plain}
\newtheorem{theorem}{Theorem}
\newtheorem*{theorem*}{Theorem}
\newtheorem{proposition}{Proposition}[section]
\newtheorem{lemma}{Lemma}[section]

\newtheorem*{lemma*}{Lemma}

\theoremstyle{definition}
\newtheorem*{defin}{Definition}
\newtheorem*{remark}{Remark}

\makeatletter
\def\blfootnote{\gdef\@thefnmark{}\@footnotetext}
\makeatother

\usepackage{amsmath,amssymb,amsfonts,amsthm,mathrsfs}

\usepackage{authblk}
\usepackage[left=2.5cm, right=3cm, top=2cm, bottom=3cm, bindingoffset=0cm, marginparwidth=3cm]{geometry}

\begin{document}

\title{Diophantine ``Tears of the Heart''}

\date{}

\author[1]{Yu.~S.~Ilyashenko}
\author[2]{S.~Minkov}
\author[1]{I.~Shilin}

\affil[1]{HSE University, Moscow, Russia}
\affil[2]{Brook Institute of Electronic Control Machines, Moscow, Russia}

\maketitle

\begin{flushright}
	\textit{To the memory of Valentin Senderovich Afraimovich,\\ an outstanding mathematician and an admirable person}
\end{flushright}

\begin{abstract}
Recent studies of topologically generic unfoldings of vector fields featuring a ``tears of the heart'' polycycle with one internal and one external winding separatrix have shown that, in a special one-parameter subfamily where the ``heart'' is preserved and the ``tear'' loop if broken, at least four invariants of weak topological classification appear. In this paper, we demonstrate that the metrical perspective yields a different result: for Lebesgue almost all values of the coefficients related to the original vector field, the special one-parameter family generates only two such invariants.

\bigskip

\noindent
{Keywords:}{ \em
planar vector fields, polycycles, typical dynamics, Diophantine-Liouville type phenomena}

\noindent
{MSC2010: 34C23, 37G99, 37E35}
\end{abstract}

\blfootnote{This study was supported by the Basic Research Program of the HSE University (project No.~075-00648-25-00 “Symmetry. Information. Chaos.”).}

\maketitle

\section{Introduction}

In \cite{IKS}, an open set of unfoldings of the ``tears of the heart'' polycycle (see Fig.~\ref{fig:th}) on $\mathbb S^2$ was described. These unfoldings admit numeric invariants of topological conjugacy, the number of which depends on how many separatrices wind around the polycycle \cite{GK}.
\begin{figure}[!ht]
\begin{center}\label{fig:th}
\includegraphics{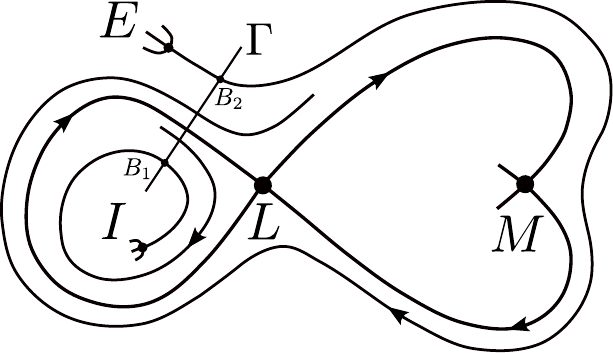}
\caption{ The polycycle ``tears of the heart'' with an exterior and interior saddles} 
\end{center}

 \end{figure}

In what follows, the simplest case of one separatrix winding inside the ``tear'' separatrix loop and one separatrix winding onto the whole polycyle is considered\footnote{As is standard, we assume that the vector field is structurally stable in restriction to the complement of a narrow neighborhood of the polycycle. We also adopt a technical `non-zero speed' condition (see, e.g., \cite{IMSh}). This assumption is needed solely to satisfy the hypotheses of the results we invoke from \cite{IKS, IMSh} and holds on an open dense set of families.}. We denote the two saddles of the polycycle by $L$ and $M$ and assume that their characteristic numbers $\lambda, \mu$ satisfy $\lambda<1,\; \lambda^2\mu>1$. According to \cite{IMSh}, the monodromy maps inside and outside of the polycyle are $C^1$-conjugated to the standard maps $x\mapsto C_1x^\lambda$ and $x\mapsto C^{-1}_2 x^{\lambda^2\mu}$. The points of intersection of the winding separatices with the semi-transversals will be denoted by $B_1$ and $B_2$.\footnote{Clearly, the choice of $B_1, B_2$ is not unique. The choice of $C_1, C_2$, however, is more subtle than expected and is, in a sense, `synchronized' with the lemmas below; see \cite{IMSh} for details.}

Since ``tears of the heart'' have three separatrix connections, this polycycle does not appear in generic two-parameter families, and it is natural to study and classify its 3-parameter unfoldings. However, such an unfolding contains special one-parameter subfamily that captures all known numeric invariants.
 
\begin{defin}
The \emph{standard one-parameter family} of ``tears of the heart'' is defined as a subfamily in which the separatrix connections corresponding to the `heart' remain intact, while only the `tear' connection (i.e., the separatrix loop of~$L$) is broken.
\end{defin}

Although the original results of \cite{IKS, GK, IMSh} describe invariants of classification with respect to {\it moderate} topological equivalence, we shall adopt the {\it weak} equivalence throughout this paper. Indeed, an inspection of the original proofs reveals that the arguments remain valid for the case of weak equivalence, since the core problem reduces to the existence of a suitable homeomorphism between the parameter bases\footnote{It was already noted in \cite{IKS}.}.

\begin{defin}
Two families of vector fields $\{v_\alpha \mid \alpha \in B \}$, $\{\tilde{v}_{\tilde{\alpha}} \mid \tilde{\alpha} \in \tilde{B}\}$ on $M$ are called \emph{weakly topologically equivalent} if there exists a map
\begin{equation}\label{eq:weak_eqiv}
H : B \times M \to \tilde{B} \times M, \quad H(\alpha, x) = \bigl(h(\alpha), H_\alpha(x)\bigr)
\end{equation}
such that $h : B \to \tilde{B}$ is a homeomorphism, and for each $\alpha \in B$ the map 
$H_\alpha : M \to M$ is a homeomorphism that links the phase portraits of $v_\alpha$ and $\tilde{v}_{h(\alpha)}$. 
\end{defin}

It is clear that the homeomorphism $h$ that establishes the equivalence of 3-parameter bases has to take the standard one-parameter subfamily of the first unfolding into the standard subfamily of the second.

It was recently shown in \cite{IMSh} that for \emph{topologically} generic families, there are at least four invariants of topological classification. As noted above, all these invariants arise from standard one-parameter subfamilies: it suffices to consider the standard one-parameter subfamilies to see that there is no equivalence when the invariants do not coincide.

The following result is in drastic contrast with the previous facts.

\begin{theorem} [on Diophantine families] \label{thm:main}
There exists a set $\mathcal{A}$ of full Lebesgue measure in the space of coefficients $(\lambda ,\mu ,B_{i},C_{i}) \simeq\mathbb R^6_+$, such that, for any two standard one-parameter families $\{v_\varepsilon\}$ and $\{\tilde {v}_{\tilde\varepsilon}\}$ with coefficients in~$\mathcal{A}$, weak equivalence is determined by the coincidence of two numeric invariants. More precisely, assuming the original vector fields that correspond to the unperturbed ``tears of the heart'' polycycle are orbitally topologically equivalent $(v_{0}\approx \tilde {v}_{0}$) and the coefficient tuples for both unfoldings are from~$\mathcal{A}$, the standard one-parameter subfamilies are weakly equivalent if and only if
$$
A=\tilde A
$$
and
$$
\tau=\tilde \tau \mod(1,A),
$$
where $A=-\ln\lambda/\ln(\lambda^2\mu)$, $\tau=s/\ln(\lambda^2\mu)$, $s= \ln \left(\frac {\ln C_1}{1 - \lambda} - \ln B_1\right)-\ln \left(\frac {\ln C_2}{1 - (\lambda^2\mu)^{-1}} - \ln B_2\right)$, and the tilde indicates the parameters associated with the second family.
\end{theorem}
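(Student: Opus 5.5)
We reduce the weak classification of the standard one-parameter families to a comparison of bifurcation sets on the parameter line. Using the normal forms from \cite{IMSh}, the interior monodromy is $x\mapsto C_1x^\lambda$ and the exterior one is $x\mapsto C_2^{-1}x^{\lambda^2\mu}$. When the tear loop is broken by $\varepsilon$, the interior monodromy becomes approximately $x\mapsto C_1x^\lambda-\varepsilon$.

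\textbf{Step 1: locate the bifurcation values.} The sparkling separatrix connections occur when the $n$-th iterate of $B_1$ (or the preimage chain of $B_2$) hits the shifted semi-transversal. This gives two geometric-type sequences of parameter values $\varepsilon^{(1)}_n$ and $\varepsilon^{(2)}_m$ accumulating at $0$. After taking $\ln\ln$ of the asymptotics, each sequence becomes an arithmetic progression up to $o(1)$ errors. Iterating $x\mapsto C_1x^\lambda$ gives $\ln x_n = \lambda^n(\ln B_1-\frac{\ln C_1}{1-\lambda})+\frac{\ln C_1}{1-\lambda}$, and similarly for the exterior map with exponent $(\lambda^2\mu)^{-1}$. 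Hence, in the coordinate $u=\ln(-\ln\varepsilon)$, the two sequences become
$$u^{(1)}_n = n\ln(1/\lambda)+\ln\Bigl(\tfrac{\ln C_1}{1-\lambda}-\ln B_1\Bigr)+o(1),$$
$$u^{(2)}_m = m\ln(\lambda^2\mu)+\ln\Bigl(\tfrac{\ln C_2}{1-(\lambda^2\mu)^{-1}}-\ln B_2\Bigr)+o(1).$$
This is the lemma from \cite{IMSh} that we invoke.

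\textbf{Step 2: reduce to the order type of the merged sequences.} After dividing by $\ln(\lambda^2\mu)$, the relative position of the two sequences is governed by the points $nA+\tau$ versus the integers $m$. The topological type of the family is determined by the combinatorial order in which the points of the two sequences interlace. A weak equivalence $h$ must preserve this interlacing, at least for all sufficiently large indices.

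\textbf{Step 3: necessity.} If the families are equivalent, the interlacing of $\{nA+\tau\}$ with $\mathbb Z$ coincides, up to index shifts, with that of $\{n\tilde A+\tilde\tau\}$. The order type of $\{nA+\tau\}$ among the integers determines $A$, since it determines the rotation number of the circle rotation by $A$. For irrational $A$ it also determines $\tau \bmod (1,A)$, by the standard uniqueness of the Sturmian coding of a rotation; the allowed shifts in $n$ and $m$ account for the ambiguity modulo $1$ and $A$.

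\textbf{Step 4: sufficiency, the main obstacle.} The $o(1)$ errors can spoil the ordering when $nA+\tau$ comes abnormally close to an integer. For those finitely or infinitely many near-coincidences, the order could differ between two families with equal invariants, and this is exactly where the extra invariants of \cite{IMSh} come from. The plan is to define $\mathcal A$ by a Diophantine condition plus a Borel--Cantelli argument.
\begin{itemize}
\item The error terms decay exponentially in $n$, like $\lambda^n$-type corrections after the $\ln\ln$ change.
\item For almost every $(A,\tau)$ we have $\operatorname{dist}(nA+\tau,\mathbb Z)\ge n^{-2}$ for all large $n$. This holds because $\sum n^{-2}<\infty$ and $\tau$ varies over a full-measure set.
\end{itemize}
Since the map from the coefficients to $(A,\tau)$ is a submersion, this is a full-measure condition in $\mathbb R^6_+$. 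Under it, the exponentially small errors eventually never change the order, so the interlacing equals the ideal one. Two families with the same $(A,\tau \bmod (1,A))$ then have the same ordered bifurcation sets up to a shift, which gives an order-preserving homeomorphism $h$ of the parameter lines. The fiberwise conjugacies $H_\varepsilon$ are built on each interval between consecutive bifurcation values as in \cite{IKS,IMSh}, since the phase portraits there are structurally stable and coincide. The remaining finitely many initial indices are handled by the assumption $v_0\approx\tilde v_0$, possibly after shrinking the parameter neighborhood.
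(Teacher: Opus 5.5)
\textbf{The gap: $EI$ connections.} Your handling of the $LE$/$LI$ interlacing agrees in substance with the paper's Lemma~\ref{proper}. Your condition that $nA+\tau$ stays at distance at least $n^{-2}$ from $\mathbb Z$ for large $n$ plays the role of the Diophantine condition. Under it, the exponentially small corrections can neither reorder the two perturbed progressions nor make them collide. The gap is in your last step.

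You claim that on each interval between consecutive $LE$/$LI$ bifurcation values the phase portraits are structurally stable, so $H_\varepsilon$ can be built there. This is false. For $\varepsilon>0$, the unstable separatrix of the outer saddle $E$ and the stable separatrix of the inner saddle $I$ each enter the gap on the transversal $\Gamma$ at one point. Whenever these two points coincide, there is an $EI$ saddle connection, and such connections occur inside every one of your intervals. Your $h$ only matches $LE$ and $LI$ values, so it need not send $EI$ connections to $EI$ connections. At those parameters $v_\varepsilon$ and $\tilde v_{h(\varepsilon)}$ are then not equivalent. A priori, the number and placement of $EI$ connections in each interval could also carry further invariants.

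\textbf{The missing ingredient} is the paper's Proposition~\ref{prop:2}: each open interval between consecutive $LE$/$LI$ values contains exactly one $EI$ connection. Existence comes from the intermediate value theorem. Uniqueness needs the strict monotonicity of the difference $d_{n,k}(\varepsilon)$ of the two entry points. The paper obtains this from two facts about the monodromy near the polycycle: it is $\delta$-contracting, and $\partial_\varepsilon F_\varepsilon(0)=-\rho'(\varepsilon)<0$, which is the non-zero speed condition. Together these give $\partial_\varepsilon F^n_\varepsilon(B_2(\varepsilon))<-\tfrac34\rho'(\varepsilon)$, and the analogous bound for $G$. With this in hand, $h$ can be adjusted inside each interval to match the $EI$ connections (Lemma~\ref{lem:3}).

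\textbf{Further omissions.} To conclude that the portraits of the two families actually coincide, you also need three more things:
\begin{itemize}
\item Lemma~\ref{lem:2}: no other sparkling connections can occur.
\item The fact that both unstable separatrices of $I$ tend to the same attractor, and both stable separatrices of $E$ to the same repellor. Otherwise the landing side of the separatrices of $L$ would produce extra combinatorics.
\item A comparison of LMF graphs, concluded by Fedorov's theorem.
\end{itemize}
Finally, the side $\varepsilon<0$, where hyperbolic limit cycles are born, needs its own (easy) treatment.
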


Henceforth, we assume throughout that $A$ is irrational. For this case, the ‘only if’ part was proven in \cite{GK,IKS}. Note that this assumption does not change the measure of $\mathcal{A}$, since the set corresponding to rational $A$ has measure zero. 
The `if' part is the main result of this paper.

Taken together, Theorem~\ref{thm:main} and the results of~\cite{IMSh} reveal a drastic difference between the behavior of topologically and metrically generic families with ``tears of the heart''.

\begin{remark} We should stress that Theorem~\ref{thm:main} holds exclusively under the condition of precisely one inner and one outer separatrix winding. The presence of additional separatrices introduces new invariants (see \cite{GK}). We conjecture, however, that in this case within our framework of Diophantene families (see below) no new numeric invariants exist beyond those presented in \cite{GK, IKS}, although some combinatorial invariants may arise.
\end{remark}

\begin{remark} Note that while the statement of the theorem applies only to standard one-parameter subfamilies, the aforementioned `discrepancy' extends to the entire three-parameter unfoldings.
\end{remark}

\begin{remark} The four aforementioned invariants for Liouvillian (more precisely, so-called shift-exp-Liouvillian) families are $\lambda$, $\mu$, $\tau\mod{1,A}$, and the logarithm of the relative scale coefficient (introduced in~\cite{IMSh}) modulo~$\ln{\lambda}$.
\end{remark}

\section{LMF graphs}

The LMF graphs (Leontovich, Mayer, Fedorov graphs) are a convenient tool to prove that two vector fields on~$\mathbb S^2$ are orbitally topologically equivalent. In describing the LMF graphs in this section, we follow the exposition from~\cite{GI}.

Informally, an LMF graph is a way to represent the topological information visible in a phase portrait.
We present the definition for the case when the vector field has only hyperbolic singularities and omit some details needed in the general case (see~\cite{GI}). We also assume that our vector fields have finitely many cycles.
To define the LMF graph, we need to surround each source and sink of the vector field $v$ by a small transversal loop; we also choose transversal loops near monodromic sides of polycycles and on each side of limit cycles. The specific choice of these loop does not affect the LMF graph up to isotopy. If such a loop intersects with the separatrices of saddles, we mark the points of intersection and refer to them as \emph{truncation vertices}, because the separatrices will be truncated at these to avoid having edges of infinite length (e.g., when a separatrix winds onto a cycle). 

Now, the LMF graph of the vector field $v$ on $\mathbb S^2$ with hyperbolic singularities is a graph
$LMF(v)$, embedded in $\mathbb S^2$, that consists of the following elements.
\smallskip

{\bf Vertices}:
(1) all singular points of $v$;
(2) all truncation vertices: the intersection points of the separatrices of~$v$
with the transversal loops chosen above;
(3) a point at each cycle;
(4) a point at each empty transversal loop, i.e., at a transversal loop that does not intersect the separatrices of~$v$.

{\bf Edges:}
(1) unstable (stable) separatrices of saddles, provided their $\omega$-
(resp., $\alpha$-) limit sets consist of a single singular point that is not a focus;
(2) truncated unstable (stable) separatrices of saddles — i.e., arcs of separatrices, each connecting a saddle and a truncation vertex, — if these separatrices do not satisfy the condition of case 1);
(3) limit cycles (such an edge starts and ends at vertices of type~3);
(4) pieces of transversal loops between adjucent truncation vertices, or the whole empty transversal loops (these are incident with vertices of type~4).

{\bf Orientation.}
The orientation of edges of types 1, 2, 3 is induced by time parametrization. The orientation of edges of type 4 is counterclockwise
with respect to the disk that contains the $\alpha$- or $\omega$-limit set corresponding to the transversal loop.

{\bf Labeling.}
Each vertex of the LMF graph is labeled by the description of its type, namely the
labels say Singular Point (SP), Truncation Vertex (TV), Vertex on a
Limit Cycle (VLC), Vertex on an Empty Transversal Loop (VETL).
Similarly, the labels on the edges say Stable Separatrix (SS), Unstable
Separatrix (US), Separatrix Connection (SC), Stable Truncated Separatrix (STS), Unstable Truncated Separatrix (UTS), Limit Cycle (LC),
Outgoing Transversal Loop (OTL), Ingoing Transversal Loop (ITL). We say that a transversal loop
is ingoing if this is a loop around its $\omega$-limit set; otherwise we say that
the transversal loop is outgoing.

\bigskip

Fedorov's theorem below (based on the previous result of Andronov, Leontovich, Gordon, Mayer \cite{ALGM}) provides an LMF graph based criterion for the conjugacy of vector fields on the sphere. 

\begin{theorem} [R. Fedorov, \cite{F}] If two LMF graphs 
$LMF(v),LMF(w)$ of two vector fields $v, w$ are isotopic on the sphere (i.e., there
exists an orientation-preserving homeomorphism of the sphere which
maps one to another, preserves orientation on edges and matches labels on edges and vertices), then $v$ and $w$ are orbitally topologically
equivalent.
\end{theorem}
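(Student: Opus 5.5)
The natural route is to turn the combinatorial isotopy between $LMF(v)$ and $LMF(w)$ into an orbit-preserving homeomorphism in three stages: first on a neighbourhood of the graph, then on each complementary face, and finally by gluing these pieces together. The tool throughout is the classical Andronov--Leontovich--Gordon--Mayer description of a phase portrait via its \emph{canonical regions} \cite{ALGM}.

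\textbf{Step 1 (the graph and its neighbourhood).} Let $\Phi$ be the orientation-preserving homeomorphism of $\mathbb S^2$ taking $LMF(v)$ to $LMF(w)$ and respecting labels and orientations. Since all singular points are hyperbolic, each saddle, node or focus of $v$ is locally orbitally equivalent to the corresponding singularity of $w$. The labels guarantee that the types agree, and the isotopy guarantees that separatrices leave a saddle in the same cyclic order. I would therefore replace $\Phi$, near each singular point, by a local orbital equivalence, and interpolate between the two in an annulus around the point. Along separatrix and limit-cycle edges $\Phi$ already maps orbits to orbits as sets. I would reparametrize it there so that it is compatible with the new maps at the endpoints. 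On edges of transversal loops (types 4 and VETL), $\Phi$ is just a homeomorphism of arcs that matches the truncation vertices, and I keep it as is.

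\textbf{Step 2 (the faces).} The next task is to show that every component of $\mathbb S^2\setminus LMF(v)$ is a canonical region, and to build the conjugacy inside it. By Poincar\'e--Bendixson, together with finiteness of cycles and hyperbolicity, each such component is one of the following:
\begin{itemize}
\item a disk bounded by a transversal loop around a sink, source or focus, on which the flow is radial;
\item an annulus between a transversal loop and a limit cycle, or a monodromic side of a polycycle, on which the flow spirals;
\item a strip between an outgoing and an ingoing transversal arc, bounded laterally by separatrices, on which the flow is parallel.
\end{itemize}
In each case the orbit space is an interval or a circle, parametrized by a transversal cross-section. I would construct the conjugacy by choosing such a cross-section $\Sigma$ joining boundary points, mapping it to a cross-section $\tilde\Sigma$ of the corresponding face of $w$ so that the endpoints agree with Step~1, and then extending along orbits. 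For strips and disks the extension uses the time parametrization normalized between the two transversal loops. For spiralling annuli it uses the first-return structure: the map on the transversal loop is pushed along orbits onto the cycle or polycycle side.

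\textbf{Step 3 (gluing and continuity).} The maps from Steps~1 and~2 agree on the common boundary edges by construction, so together they define a bijection of $\mathbb S^2$ sending orbits to orbits. What remains is to check continuity of this map and of its inverse. The main obstacle I expect is continuity near separatrices and near cycles or polycycle sides, where the passage time through a face becomes unbounded: orbits close to a separatrix linger near a saddle, and orbits in a spiralling annulus wind infinitely often.

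To handle this I would avoid normalizing by time near these sets and parametrize orbits by their successive intersections with fixed transversals instead. Near saddles I would use the local conjugacy from Step~1 on a hyperbolic sector, so that the face map and the saddle map coincide on a small transversal. Near a cycle or polycycle I would match the spirals turn by turn using monotonicity of the return maps. Continuity then follows from continuous dependence of trajectories on initial conditions away from singular points, together with the local models at the singular points.
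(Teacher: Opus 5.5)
The paper does not prove this statement. It quotes it from~\cite{F}, where it is derived from the description of phase portraits by schemes in~\cite{ALGM}, so all of the local analysis near saddles, cycles and polycycles is inherited from there. Your architecture (canonical regions, region-by-region construction, gluing) is the classical one behind that theorem, but you rebuild it by hand, and as written two steps fail.

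\textbf{Transversal-loop edges.} You keep $\Phi$ on these edges and claim in Step~3 that the face maps agree with it ``by construction''. In a strip face every orbit crosses the outgoing arc $\alpha$ and the ingoing arc $\beta$ exactly once. Hence the maps on these arcs must satisfy $h_\beta\circ T=\tilde T\circ h_\alpha$, where $T,\tilde T$ are the transit maps $\alpha\to\beta$ of $v$ and $w$. So $h_\alpha$ and $h_\beta$ cannot both be restrictions of the arbitrary homeomorphism $\Phi$. The isotopy can only be used combinatorially: it tells you which face corresponds to which, and the cyclic order of truncation vertices. The maps on ingoing loops, and hence the orbit correspondences in the disc and annulus faces behind them, must be induced by the flow. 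Two smaller points: the interpolation annulus of Step~1 is not orbit-preserving, and your list of faces omits annuli of parallel flow between an outgoing and an ingoing loop (e.g.\ a field with one source and one sink).

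\textbf{Continuity at saddles.} This is the real analytic content, and it is not established.

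\emph{Normalized time fails.} Take the linear saddle $\dot x=x$, $\dot y=-y$ with entrance $\{y=1\}$, exit $\{x=1\}$ and orbit correspondence $c\mapsto g(c)$. The normalized-time map sends $(c^{1-s},c^{s})$ to $(g(c)^{1-s},g(c)^{s})$. It has a limit on the unstable separatrix only if $\ln g(c)/\ln c$ converges as $c\to0$. This fails for suitable homeomorphisms, e.g.\ $g(c)=c^{1+\frac12\sin\ln\ln(1/c)}$, and bounded transit times outside the saddle neighbourhood do not change this.

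\emph{The Step~3 remedy is not available.} You propose using the local conjugacy of Step~1 and making it coincide with the face map on a small transversal. But the orbit correspondence in a face is already fixed. It is transported from saddle to saddle along chains of separatrix connections. Near a monodromic side of a polycycle it is entirely forced by the map on the transversal loop. So a local conjugacy chosen in advance at a saddle will generally induce the wrong correspondence in its hyperbolic sectors.

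\emph{What is missing is a flexibility lemma.} For a hyperbolic sector, every orientation-preserving homeomorphism of an entrance cross-section fixing the separatrix point, together with arbitrary prescribed homeomorphisms of the two bounding separatrices, should extend to an orbital equivalence of the sectors. This extension must be continuous up to the separatrices and the saddle. One way to prove it: cut each orbit $xy=c$ at $(\sqrt c,\sqrt c)$. Map the two halves in the $x$- and $y$-coordinate respectively, using homeomorphisms that coincide with the separatrix maps outside a neighbourhood of the saddle shrinking with $c$.

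With this lemma applied sector by sector, and with the return-map parametrization you describe near limit cycles, your outline would go through. Without it, Step~3 is only an assertion.
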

Effectively, this theorem bridges the gap between geometric intuition and formal proof: it ensures that fields which appear conjugate `from the picture' are indeed topologically conjugate.

\section{The set of Diophantine families}

Let $A$ and $s$ be as in Theorem~\ref{thm:main}, and let $\gamma=\ln(\lambda^2\mu)$.

\begin{defin} A family is called \emph {Diophantine} if 
the inclusion
$$
A-m/n\in \left[ \frac{s - 1/(m^2+n^2)}{\gamma n}, \frac{s+ 1/(m^2+n^2)}{\gamma n}\right]
$$
holds only for a finite number of integer pairs $(m,n)$. 
\end{defin}

The full measure set $\mathcal{A}$ in Theorem~\ref{thm:main} is the set of coefficients of Diophantine families with additional assumption that $A = \frac{-\ln \lambda}{\ln \lambda^2\mu}$ is irrational.
Let us verify that Diophantine families correspond to a set of full Lebesgue measure in the coefficient space; the same will trivially follow for the set $\mathcal{A}$.

\begin{proposition} For almost all parameters $(\lambda ,\mu ,B_{i},C_{i})$ with respect to the Lebesgue measure, the corresponding families are Diophantine.
\end{proposition}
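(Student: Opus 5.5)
Our approach is a Borel--Cantelli argument after changing variables. The map $(\lambda,\mu,B_1,B_2,C_1,C_2)\mapsto(A,s,\gamma,\dots)$ is smooth. We restrict to the open region where the logarithms in $s$ are defined; outside it the statement is vacuous or the region has measure zero. So it suffices to show the following: for almost every pair $(\gamma,s)$, and then for almost every $A$, only finitely many $(m,n)$ satisfy the inclusion. More precisely, we use Fubini. We fix all coefficients except one that moves $A$ while keeping $s$ and $\gamma$ essentially controlled, or better, we pass to coordinates in which $A$ is a free variable.

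Concretely, we take $(\lambda,\mu)\mapsto(A,\gamma)$ with $\gamma=\ln(\lambda^2\mu)$ and $A=-\ln\lambda/\gamma$. This is a local diffeomorphism: $\ln\lambda=-A\gamma$ and $\ln\mu=\gamma+2A\gamma$, with Jacobian nonvanishing for $\gamma\neq 0$, and here $\gamma>0$. Hence null sets in $(A,\gamma)$ correspond to null sets in $(\lambda,\mu)$, and Lebesgue-null sets are preserved under this change of variables together with the identity on $(B_i,C_i)$.

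Now we fix $\gamma>0$ and $(B_i,C_i)$, though $s$ depends on $\lambda$ as well. So we fix $\lambda$ and $\gamma$ instead. Then $\mu$ is determined, and $A$ is determined too. That fails, so we use the other variables. We fix $(\lambda,\mu,B_2,C_1,C_2)$ and let $B_1$ vary. Then $s$ ranges over an interval as a smooth strictly monotone function of $B_1$, since $\partial s/\partial B_1 = -\frac{1}{B_1}\cdot\frac{-1}{(\cdot)}\neq0$. Meanwhile $A$ and $\gamma$ are fixed. So it suffices to show that for fixed irrational or arbitrary $A$ and fixed $\gamma>0$, almost every $s$ makes the family Diophantine.

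The inclusion is equivalent to
$$ s\in I_{m,n}:=\Bigl[\gamma(nA-m)-\tfrac{1}{m^2+n^2},\ \gamma(nA-m)+\tfrac{1}{m^2+n^2}\Bigr], $$
an interval of length $2/(m^2+n^2)$. Since $\sum_{(m,n)\neq(0,0)}(m^2+n^2)^{-1}$ diverges logarithmically, we cannot apply Borel--Cantelli naively. This is the main obstacle. We resolve it by restricting $s$ to a bounded interval $J$ with $|s|\le R$. Then $I_{m,n}$ meets $J$ only if $|nA-m|\le (R+1)/\gamma$, so for each $n$ only $O(1)$ values of $m$ contribute, each with $m\asymp n$ up to constants. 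The total measure of the relevant intervals is $\sum_n O(1)\cdot 2/n^2<\infty$. The terms with $n=0$ give $|m|\le (R+1)/\gamma$, finitely many.

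By Borel--Cantelli, almost every $s\in J$ lies in only finitely many $I_{m,n}$. Exhausting $\mathbb R$ by countably many such $J$ gives almost every $s$. Pulling back through the monotone smooth map $B_1\mapsto s$ gives almost every $B_1$. Fubini over the remaining coefficients then yields full measure in $\mathbb R^6_+$. Finally, intersecting with the full-measure set where $A\notin\mathbb Q$ gives $\mathcal A$.
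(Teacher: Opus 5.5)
Your proof is correct, but it runs the Borel--Cantelli/Fubini argument in the direction dual to the paper's.

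The paper freezes $\gamma$ and $s$ and lets $A$ move. Along each fibre it fixes $\lambda^2\mu$, $B_1$, $B_2$, $C_2$ and $\ln C_1/(1-\lambda)$, so $\lambda$, $\mu$ and $C_1$ co-vary. It then shows that for every fixed $(\gamma,s)$ almost every $A\in[-T,T]$ is admissible. In the $A$-variable the bad intervals have length $2/(\gamma|n|(m^2+n^2))$. The bound $|A|\le T$ leaves $O(|n|)$ values of $m$ per $n$, giving $O(n^{-2})$.

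You instead freeze $A$ and $\gamma$ and move only $B_1$, which parametrizes the $s$-line diffeomorphically. Your bad intervals are longer, of length $2/(m^2+n^2)$. However, the window $|s|\le R$ leaves only $O(1)$ values of $m$ per $n$, so you again get $O(n^{-2})$. It is the same bookkeeping with length and count trading places.

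Your choice buys two things:
\begin{itemize}
\item a cleaner Fubini step, since your fibres are coordinate lines rather than curves along which three coordinates must be adjusted simultaneously;
\item a uniform statement: for \emph{every} fixed $A$ and $\gamma$, almost every $s$ is admissible.
\end{itemize}
The paper's version instead gives typicality in the $A$-direction for each fixed $(\gamma,s)$.

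Two cosmetic remarks. First, the abandoned change of variables $(\lambda,\mu)\mapsto(A,\gamma)$ in your opening paragraphs can simply be deleted. Second, the region where the logarithms in $s$ are undefined does not have measure zero. The condition is just not defined there, which the paper also takes for granted.
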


\proof For fixed $\lambda^2\mu, B_i$, $C_2$ and $\ln C_1/(1-\lambda)$, the values of $\gamma$ and $s$ are likewise fixed, while $A$ can be varied by altering $\lambda$ and $\mu$ (while $\lambda^2\mu$ does not change). For arbitrary $T>0$, let us show that, for any fixed $\gamma$ and $s$, the Diophantine property holds for almost every $A\in[-T,T]$. Indeed, put 
$$J=1+(|s|+1)/|\gamma|.$$
\smallskip
\noindent {\bf Case 1.} If $|m| > (|A|+J)|n|$, it follows that $|A-m/n|>J$. Since  
$$
\left[ \frac{s - 1/(m^2+n^2)}{\gamma n}, \frac{s+ 1/(m^2+n^2)}{\gamma n}\right]\subset[-J,J],
$$
no solutions exist in this case.
\smallskip

\noindent {\bf Case 2.} 
Another option is $|m|\leq(|A|+J)|n|\leq (T+J)|n|$.

For any pair $(m,n)$, the inclusion holds for~$A$ that form a segment of length $\frac{2}{\gamma n(m^2+n^2)}$. 
Therefore, for fixed $n$ and all possible $m$ the total measure of the union $J_n$ of such segments can be estimated as

$$
\sum_{|m|\leq (T+J)|n|} \frac{2}{\gamma n(m^2+n^2)}\leq\frac{2}{\gamma |n|} \sum_{|m|\leq (T+J)|n|} \frac{1}{n^2}\leq\frac{2}{\gamma n}\cdot \frac{2(T+J)}{n}.
$$

Thus, the total measure of the union of $J_n$ over $|n|>N$ is at most
$$
\frac{4(T+J)}{\gamma}\sum_{|n|>N}\frac{1}{n^2}=o(1), \quad n \to +\infty.
$$

If the Diophantine property does not hold for $A$, then $A$ belongs to $J_n$ for infinitely many~$n$, meaning it lies in the tail sets $\cup_{|n|>N}J_n$ for any~$N$. Since the measure of these tail sets tends to zero as $N\rightarrow \infty$, it follows that the set of forbidden values of $A$ is of measure zero.
\smallskip

Consequently, for fixed $\gamma$ and $s$, the Diophantine property holds for almost every $A$ with respect to the Lebesgue measure. When we fix the values of $\lambda^2\mu, B_1, B_2, C_2$, and $C_1/(1-\lambda)$, we obtain a curve in the space of coefficients. When this curve is parametrized by $A$, almost every value of the parameter corresponds to a Diophantine family. Since the mapping $\lambda \mapsto \ln \lambda /\ln (\lambda ^{2}\mu )$ is a diffeomorphism (recall that $\lambda^2\mu>1$ is fixed), the same is true when we reparametrize the curve by $\lambda$. Fubini's theorem applied to the foliation of the coefficients space into these curves completes the proof: for almost every tuple of coefficients, the family is Diophantine.
\qed

\section{Proof of the Main Theorem}

\subsection{Sparkling saddle connections and useful lemmas}

Consider a standard one-parameter family with an inner saddle $I$ inside the loop. Then for an infinite sequence of values $\varepsilon = \varepsilon_n \to 0$ sparkling saddle connections between $L$ and $I$ occur, i.e., a vector field corresponding to $\varepsilon_n$ exhibits a separatrix connection that ``turns around the vanished separatrix loop $n$ times''. The same holds for $L$ and the outer saddle $E$. We assume, without loss of generality, that these bifurcations occur for $\varepsilon>0$, by applying the change $\varepsilon \mapsto -\varepsilon$ if necessary.

In the double-logarithmic chart $\ln(-\ln\varepsilon)$, the bifurcation parameters form two perturbed arithmetic progressions $(e_n)_{n\in\mathbb N}$ and $(i_m)_{m\in\mathbb N}$; $e_n$ correspond to the $EL$ connections. It is worth noting that a pair of perturbed arithmetic progressions cannot always be mapped into another such pair by a homeomorphism of the real line (see \cite{IKS, IMSh}). This motivates the following definition.

\begin{defin} We say that two collections of marked sequences of points on $\mathbb R_+$  are \emph{order-equivalent} at 0, if there exists a homeomorphism of a neighborhood of $0$ that maps one collection onto the other, preserving the marks, i.e., each sequence is taken into a corresponding sequence of another collection, with a possible exception at finitely many points.
\end{defin}

The following lemma is based on the results of~\cite{IMSh}.

\begin{lemma} \label{proper} In the case where $A=\tilde A$ and $\tau=\tilde\tau \mod(1,A)$, the sequences corresponding to $LI$ connections and  $LE$ connections are order-equivalent for Diophantine families.
\end{lemma}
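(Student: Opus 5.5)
We will work in the double-logarithmic chart $t=\ln(-\ln\varepsilon)$, in which the $LE$ and $LI$ bifurcation values form two perturbed arithmetic progressions. From \cite{IMSh} we take the asymptotics
\[
e_n=\gamma_E n+a+o(1),\qquad i_m=\gamma_I m+b+o(1),
\]
with a rate at least exponential in $n,m$. Here, after normalizing by $\gamma=\ln(\lambda^2\mu)$, the ratio of the steps is governed by $A$ and the relative shift is governed by $s$. Concretely, we first rewrite both sequences in units of $\gamma$, applying the affine change $t\mapsto t/\gamma$ (an order-preserving homeomorphism). In this scale the two sequences become $n+\alpha+\delta_n$ and $mA^{-1}+\beta+\delta'_m$, or an equivalent normalization. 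The relative phase of the two progressions is then exactly $\tau$ modulo the lattice generated by $1$ and $A$. The constants $C_i,B_i$ enter only through $s$, by the synchronization of $C_1,C_2$ explained in \cite{IMSh}.

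The order-type of the merged marked sequence near $+\infty$ in $t$ (equivalently near $\varepsilon=0$) is determined by the signs of the differences $e_n-i_m$ for pairs $(n,m)$ that come close. For the model, unperturbed progressions, $e_n-i_m$ is proportional to $n-mA^{-1}$ shifted by the phase, so $e_n=i_m$ in the model corresponds to $A-m/n=s/(\gamma n)$ (after matching the indexing convention). The Diophantine condition says that for all but finitely many $(m,n)$ this linear expression is at distance at least about $1/(\gamma n(m^2+n^2))$ from zero. The perturbations $\delta_n,\delta'_m$ decay exponentially, hence much faster than $1/(n(m^2+n^2))$. Consequently, for all but finitely many pairs, the order of $e_n$ and $i_m$ coincides with the order in the model progressions with parameters $(A,\tau)$. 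The same holds for the tilde family. Since $A=\tilde A$ and $\tau\equiv\tilde\tau \pmod{1,A}$, the two model configurations coincide up to a reindexing shift $n\mapsto n+k$, $m\mapsto m+l$, which comes from the lattice ambiguity. Irrationality of $A$ ensures that the model has no coincidences at all, so the model order is a strict total order on the indices.

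With matching orders on a tail, we build the homeomorphism explicitly. We map $e_n\mapsto\tilde e_{n+k}$ and $i_m\mapsto\tilde i_{m+l}$ for all indices beyond some threshold. This map is monotone on the merged tail, because the relative orders agree. We then extend it piecewise linearly between consecutive marked points and transport it back to a neighborhood of $\varepsilon=0$; discarding finitely many points is allowed by the definition of order-equivalence.

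The main obstacle is the first step: extracting from \cite{IMSh} the precise asymptotics with an explicit error rate, and checking that the comparison $e_n\lessgtr i_m$ reduces exactly to the sign of $A-m/n-s/(\gamma n)$ up to errors that are $o\bigl(1/(n(m^2+n^2))\bigr)$. This includes the correct sign and indexing conventions, and a check that the relevant pairs have $m\asymp n$, so that the exponential decay in $n$ dominates. We would first verify that the errors are $O(\lambda^{cn})$ in the $t$-chart, then use the Case 1 bound from the Proposition to restrict to pairs with $m\asymp n$.
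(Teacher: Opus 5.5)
Your proposal is correct and follows essentially the paper's route. The paper quotes, from the proof of Lemma~4.1 of \cite{IMSh}, that order-equivalence holds once $A-m/n$ avoids the exponentially small windows $\frac{s+(Q_1(m,n),Q_2(m,n))}{\gamma n}$ for large $(m,n)$, and notes that the Diophantine margin $1/(m^2+n^2)$ eventually dominates them. You instead prove the needed direction of that criterion yourself (each family matches its model order on a tail, and the two models agree after an index shift), which also covers the simultaneous-connection case that the paper treats separately.

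Two slips are harmless. With the rescaling $t\mapsto t/\gamma$ the $LI$ step is $A$, not $A^{-1}$. Irrationality of $A$ only rules out a second model coincidence, not a single one, but that one is discarded with the finitely many Diophantine exceptions.
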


\proof 
First, assume that  $LI$ and $LE$ connections do not occur simultaneously. As was shown in the proof of Lemma~4.1 of~\cite{IMSh}, for two standard subfamilies, the order equivalence of the pairs of sequences corresponding to $LI$ and  $LE$ connections is equivalent to a condition that $A$ does not belong to a certain family of intervals. More precisely, there exist functions $Q_i(m, n), \; i = 1, 2$, defined in terms of asimptotic expansions for the saddle connection parameter values $i_n, e_m, \tilde{i}_n, \tilde{e_m}$, such that the pairs of sequences of $LI$ and $LE$ connections are order-equivalent if and only if the condition
$$
A-m/n\notin \frac{s+(Q_1(m,n),Q_2(m,n))}{\gamma n}
$$
holds for all sufficiently large $m$ and~$n$ (see formula~(4.2) of~\cite{IMSh}; the explicit form of $Q_i(m,n)$ is given right after this formula). 
The important part is that $Q_i(m,n)$ decay exponentially in $(m,n)$; their leading terms were explicitly calculated in~\cite[Prop.~4.1]{IMSh}. 

For Diophantine families, the inclusion
$$
A-m/n\in \left[ \frac{s - 1/(m^2+n^2)}{\gamma n}, \frac{s+ 1/(m^2+n^2)}{\gamma n}\right]
$$
holds only for a finite number of integer pairs $(m,n)$. Since $|Q_i(m,n)|$ decays exponentially, it follows that $|Q_1(m,n)|+|Q_2(m,n)|<1/(m^2+n^2)$ holds asymptotically, which completes the proof of the lemma for this case.

According to the calculations in \cite{IMSh} (see Lemma~3.1), $LI$ and $LE$ connections occur simultaneously only if
$$
n\ln\lambda-m\ln(\lambda^2\mu)=s - r_1(n) + r_2(m),
$$
where $r_1(n) = O(\lambda^{n})$ and $r_2(m) = O((\lambda^2\mu)^{-m})$. This can be written as
$$
A-m/n=s/(\gamma n)+O(\lambda^{n})+O((\lambda^2\mu)^{-m}),
$$
 which is not possible for Diophantine families provided that $(m,n)$ is sufficiently large. The proof is complete.
\qed

In principle, new numeric invariants could arise from other saddle connections in a standard one-parameter family. The following two lemmas, however, rule out this possibility.

\begin{lemma}\label{lem:2}
    There are no sparkling saddle connections in a standard one-parameter family other than $LI$ connections, $LE$ connections and $EI$ connections.
\end{lemma}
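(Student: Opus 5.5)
We prove Lemma~\ref{lem:2} by enumerating which separatrices can possibly enter a small neighborhood of the vanishing ``tear'' loop for small $\varepsilon$, and showing that only the listed pairs can connect there. Outside a narrow neighborhood $U$ of the polycycle the field $v_0$ is structurally stable, so for small $\varepsilon$ no saddle connection can be formed by separatrices lying entirely outside $U$. Any sparkling connection must therefore pass through $U$; it must be born from the unperturbed polycycle and have length going to infinity as $\varepsilon\to 0$. The saddles available near $U$ are $L$ and $M$ (on the polycycle) and the two saddles $I$ and $E$ whose separatrices wind onto the polycycle from inside the tear loop and from outside the whole polycycle. Other saddles of $v_0$ have separatrices that avoid $U$, and by structural stability they keep doing so for small $\varepsilon$.

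First, we use that in the standard family the heart connections are preserved. Consequently the separatrices of $M$ forming the heart, and the separatrix of $L$ belonging to the heart, remain connections for all small $\varepsilon$; they are not free to form new sparkling connections. Only the two separatrices of $L$ that formed the broken loop, together with the winding separatrices of $I$ and $E$, are free. Next, we analyse the monodromy near the broken loop. For $\varepsilon>0$ (the side where the bifurcations occur), the unstable separatrix of $L$ passes through the gap and then follows the heart. Its fate is governed by the two monodromy maps $x\mapsto C_1x^\lambda$ (inside) and $x\mapsto C_2^{-1}x^{\lambda^2\mu}$ (outside). The separatrix of $I$ winds inside the tear region, and that of $E$ winds outside the heart. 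Comparing positions on the semitransversal through the gap gives the $LI$ connections (the $L$-separatrix exits through the gap and meets the $I$-separatrix after $n$ turns) and the $LE$ connections. An $EI$ connection can occur when the separatrix of $E$ enters through the gap and meets the separatrix of $I$. Finally, we rule out any other pairing. An $L$--$L$ connection would be a homoclinic loop of $L$ different from the broken one; this is impossible because, on the side where the loop is broken, the return map is monotone with no fixed point of the required type for small $\varepsilon$. Connections $I$--$I$, $E$--$E$ and any connection involving $M$ are excluded because each saddle contributes only one free separatrix near $U$, and $M$ contributes none.

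The main obstacle is a rigorous description of where the free separatrices go in $U$ for small $\varepsilon$: each must either enter the gap or wind onto a limit cycle or polycycle remnant. We would handle this using the $C^1$ normal forms of the monodromy maps from~\cite{IMSh} and a Dulac-map estimate near $L$. These give a complete picture of the semitransversal: which orbits cross the gap, and in which order. This order determines exactly which free separatrices can coincide.
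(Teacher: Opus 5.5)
Your reduction is the same as the paper's. Only the two loop separatrices of $L$, the winding stable separatrix of $I$ and the winding unstable separatrix of $E$ are free. That rules out $I$--$I$, $E$--$E$ and anything involving $M$ or the other saddles, so everything rests on excluding an $L$--$L$ connection. That step has a gap.

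A loop of $L$ re-formed after $n$ turns is not a fixed point of a return map. It means the trace $u$ of the unstable loop separatrix on $\Gamma$ is eventually carried onto the trace $s$ of the stable one. Monotonicity plus absence of cycles only tells you that iterates move away from $s$ while they stay in the domain of the return map near the loop. Since $\lambda<1$, the unstable separatrix spirals away from the vanished loop and leaves $U$. Nothing in your argument then prevents it from coming back to $L$ along the stable separatrix, which in backward time winds around the whole polycycle from outside.

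Your geometric description is also off. For $\varepsilon>0$ the unstable loop separatrix does not pass through the gap and follow the heart; that is the $\varepsilon<0$ behaviour, where it tends to the outer attracting cycle. It is an endpoint of the gap and continues into the tear region, which is exactly why $LI$ connections occur.

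The missing idea is the Bendixson sack used in the paper. For $\varepsilon>0$, take three pieces:
\begin{itemize}
\item the arc of the unstable loop separatrix from $L$ to $u$;
\item the gap between $u$ and $s$ on $\Gamma$;
\item the arc of the stable loop separatrix from $s$ to $L$.
\end{itemize}
Together they form a Jordan curve that orbits can cross only through the gap, and only in one direction. Hence the complementary component into which the flow enters through the gap is positively invariant. The forward continuation of the unstable separatrix beyond $u$ lies in it, while the backward continuation of the stable separatrix beyond $s$ lies in the other, negatively invariant, component. So the two separatrices can never coincide.

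This argument is global and purely topological. No normal forms or Dulac-map estimates are needed for this lemma, so the ``main obstacle'' you identify does not arise.
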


\proof In a standard one-parameter family, only four separatrices can form a connection: two separatrices of~$L$, one of~$I$, and one of~$E$. A connection is always formed by an incoming and an outgoing separatrix; hence, only four options are available. However, the two separatrices of $L$ cannot form a connection because for $\varepsilon > 0$ they form a Bendixson sack, i.e., a trapping region. This leaves the three options listed in the lemma.
\qed

\begin{lemma}\label{lem:3} If the $LE$ and $LI$ connections of two families are order-equivalent, then the same holds for the collections of all $LE$, $LI$, and $EI$ connections of the two families.
\end{lemma}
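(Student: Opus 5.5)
Our plan is to show that the $EI$ connections are forced by the $LI$ and $LE$ connections: each $EI$ connection parameter lies in a specific gap determined by its neighbouring $LE$ and $LI$ connection parameters, and it is the only $EI$ connection in that gap. Then any homeomorphism realizing the order-equivalence of the $LE$ and $LI$ sequences can be modified inside each gap to carry the $EI$ points to their counterparts. This gives order-equivalence of the full marked collections.

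\textbf{Step 1: locate the $EI$ connections.} For $\varepsilon>0$, the unstable separatrix of $E$, winding around the polycycle, enters a neighbourhood of the broken loop of $L$. Its fate is governed by its position relative to the stable separatrix of $L$. An $EI$ connection means that this separatrix of $E$ passes next to $L$ and then, following the unstable separatrix of $L$, lands exactly on the stable separatrix of $I$. In the transversal coordinate on the semitransversal near $L$, let $x_E(\varepsilon)$ be the position of the trace of the separatrix of $E$, and let $x_I(\varepsilon)$ be the position of the trace of the separatrix of $I$ pulled back by the correspondence map. Then $LE$ events are the zeros of $x_E$, $LI$ events are the zeros of $x_I$, and $EI$ events are the zeros of $x_E-x_I$, taken with the appropriate number of turns. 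Using the monotonicity in $\varepsilon$ of the correspondence maps (the standard maps $x\mapsto C_1x^\lambda$ and $x\mapsto C_2^{-1}x^{\lambda^2\mu}$ from \cite{IMSh}, which have non-zero speed), we will show the following. Between two consecutive zeros of $x_E$ (respectively $x_I$), the function changes in a controlled monotone way. Consequently an $EI$ connection occurs exactly when the traces of $E$ and $I$ lie on the same side of the stable separatrix of $L$ and swap their order. This can happen only once on each interval between consecutive points of the merged $LE\cup LI$ sequence where the combinatorics allow it.

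\textbf{Step 2: combinatorial determination.} We then check that whether an $EI$ point lies in a given gap of the merged sequence depends only on the marks of that gap's endpoints and on the mark pattern nearby. For example, an $EI$ connection appears between an $LE$ point and the next $LI$ point (or vice versa), when the two separatrices cross in opposite directions. By Lemma~\ref{proper} (and Lemma~\ref{lem:2}, which guarantees no other connections), the order-equivalence preserves these marked patterns up to finitely many points. Hence the gaps containing exactly one $EI$ point correspond bijectively under the homeomorphism.

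\textbf{Step 3: assemble the homeomorphism.} On each such gap, replace the original homeomorphism by a piecewise-linear homeomorphism of the gap that fixes the endpoints and sends the $EI$ point to the $EI$ point of the corresponding gap. Gluing these over all gaps gives a homeomorphism of a neighbourhood of $0$, since the modifications shrink as the gaps shrink to $0$. It preserves all three marks. The main obstacle is Step 1, namely proving uniqueness and exact localization of $EI$ points within gaps. This needs monotonicity of $x_E-x_I$ on each gap, which we expect to derive from the asymptotic expansions in \cite{IMSh} (Lemma~3.1 there), together with the non-zero speed assumption.
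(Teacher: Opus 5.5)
\textbf{Verdict: genuine gap.} Your skeleton is the paper's: the $LE\cup LI$ parameters cut a neighbourhood of $0$ into intervals, each interval contains exactly one $EI$ parameter, and the given homeomorphism is corrected interval by interval. But all the content sits in that localization statement (Proposition~2 of the paper), and your Step~1 does not prove it.

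\textbf{Existence.} You give no reason why the two traces must ever ``swap''. Your Step~2 also expects that only some intervals carry an $EI$ point, depending on the endpoint marks. In fact every interval carries one, and the mechanism is an intermediate value argument on the gap:
on a fixed transversal $\Gamma$ through the loop, for $\varepsilon>0$ the unstable separatrix of $E$ and the stable separatrix of $I$ each cross the segment between $W^s(L)$ and $W^u(L)$ exactly once. An $EI$ connection means these two crossing points coincide.
At an $LE$ event the $E$-crossing sits at the $W^s(L)$ end of the gap. On the other side of the event it reappears, after one more turn, at the $W^u(L)$ end, because the monodromy sends one end of the gap to the other ($F_\varepsilon(0)=-\rho(\varepsilon)$ in the paper's notation). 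So between consecutive $LE$ events the $E$-crossing sweeps the whole gap, and likewise the $I$-crossing does between consecutive $LI$ events.
Consequently, the difference of the two crossing positions has one fixed sign just after any $LE$ or $LI$ point and the opposite sign just before the next one, whatever the types of the endpoints. This gives at least one zero in every interval.

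\textbf{Uniqueness.} You defer to a monotonicity you expect to extract from \cite{IMSh}, and the tool you name cannot supply it: $x\mapsto C_1x^\lambda$ and $x\mapsto C_2^{-1}x^{\lambda^2\mu}$ are $\varepsilon$-independent normal forms of the unperturbed monodromies.
Moreover, knowing the sign of each trace's velocity is not enough. In the $W^u(L)$-based chart $y=-\rho-x$ one has
$d'=\partial_\varepsilon G^k_\varepsilon(B_1(\varepsilon))+\partial_\varepsilon F^n_\varepsilon(B_2(\varepsilon))+\rho'(\varepsilon)$,
so the two velocities must together outweigh $\rho'$.
The paper obtains $\partial_\varepsilon F^n_\varepsilon(B_2(\varepsilon))<-\tfrac{3}{4}\rho'(\varepsilon)$, and likewise for $G$, by iterating the \emph{perturbed} monodromy. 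Near the polycycle this map is $\delta$-contracting with $\partial_\varepsilon F_\varepsilon(x)=-\rho'(0)+o(1)$ (from \cite{IKS}, using the non-zero speed condition $\rho'>0$). Hence the recursion $u\mapsto -K+\delta u$, with $K=\tfrac{9}{10}\rho'(0)$ and $\delta=\tfrac{1}{10}$, pushes the bound below $-K$.

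Without the sweeping/intermediate-value argument and this quantitative estimate, your Steps~2--3 have nothing to act on. (Also, Lemma~\ref{proper} plays no role here: the $LE/LI$ homeomorphism is the hypothesis of the lemma.)
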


This lemma formally states that $EI$ connections yield no additional invariants in the case of a standard one-parameter family. The proof is postponed until Section~\ref{sec:L3_proof}.

\subsection{No Additional Combinatorics Lemma}

Before the proof of Theorem~\ref{thm:main}, we need a final lemma that rules out the existence of non-trivial {\it combinatorial} invariants:

\begin{lemma} Suppose that the stable separatrix of the saddle $I$ is the only separatrix that winds onto the loop of the `tears of the heart' in the reversed time and the unstable separatrix of $E$ is the onply one that winds onto the whole polycycle. Then both unstable separatrices of $I$ share the same attractor and both stable separatrices of~$E$ share the same repellor.
\end{lemma}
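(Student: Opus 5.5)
We argue by planar topology. The approach is to use the separatrix $\omega$-limit structure of the unperturbed field $v_0$ together with the Jordan curve theorem applied to the polycycle. Set-up: the "heart" (the big loop through $L$ and $M$) and the "tear" (the separatrix loop of $L$) split $\mathbb S^2$ into three regions. These are the interior $D_1$ of the tear, the region $D_2$ between the tear and the heart, and the exterior $D_3$ of the whole polycycle. By the hypotheses $\lambda<1$ and $\lambda^2\mu>1$, the tear loop is repelling from inside; its monodromy is $x\mapsto C_1x^\lambda$, which is expanding near $0$. The whole polycycle is attracting from outside; its monodromy is $x\mapsto C_2^{-1}x^{\lambda^2\mu}$. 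This matches the hypothesis that a stable separatrix winds inside the tear in reversed time and an unstable one winds onto the polycycle from outside. Hence $I\in D_1$ and $E\in D_3$.

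\emph{Step 1: the saddle $I$.} Its stable separatrix $W^s_1(I)$ has the tear loop as $\alpha$-limit set. Its other stable separatrix $W^s_2(I)$ must have an $\alpha$-limit set inside $D_1$, since $D_1$ is invariant with boundary a union of orbits. By hypothesis this set is not the polycycle, so by Poincaré--Bendixson and structural stability away from the polycycle it is a hyperbolic repellor $R$: a source or a repelling cycle. Consider the closed curve $\Gamma$ formed by $W^s_1(I)\cup W^s_2(I)\cup\{I\}$, completed by the limit sets at either end. The two unstable separatrices of $I$ leave $I$ on opposite sides of $W^s(I)$. A naive Jordan-curve argument would put them in different components. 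Because $W^s_1(I)$ spirals onto the tear, however, the set $D_1\setminus \overline{W^s(I)}$ is actually connected "around" the spiral. I therefore plan to argue via a transversal: take a small semi-transversal $\Sigma$ to the tear inside $D_1$. The spiral $W^s_1(I)$ cuts $\Sigma$ at points $b_k\to 0$, and between consecutive points $b_k,b_{k+1}$ the flow-out is a single strip. Both unstable separatrices of $I$ eventually enter this strip structure, or not, symmetrically. This reduces the claim to a statement about the sides of $W^s(I)$ on which the unstable separatrices lie.

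\emph{Step 2: the actual claim.} Each unstable separatrix $W^u_j(I)$ has an $\omega$-limit set in $D_1$. It cannot be the tear loop, since that loop repels from inside. By hypothesis it is also not a separatrix connection, because $v_0$ is structurally stable off a neighborhood of the polycycle. So each limit set is a hyperbolic attractor. Near $R$, take a transversal loop $\ell_R$ around $R$. Then $W^s(I)$ becomes a single arc running from $\ell_R$ to the tear, and its complement in $D_1\setminus\mathrm{int}(\ell_R)$ is an annulus cut by one arc, hence a topological disc. Both unstable separatrices lie in this disc. To show that they share one attractor, I would use uniqueness of attractor basins within the disc: the only orbits in the disc not tending to an attractor are those tending to the tear, and those form only the separatrix $W^s(I)$ itself. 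All other orbits in the disc fill open basins. These basins are disjoint, and their union is the connected disc minus the closed stable separatrices of saddles. The only such separatrix inside the disc is $W^s(I)$, which lies on its boundary. Hence there is exactly one basin, and both $W^u_j(I)$ lie in it.

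\emph{Step 3: the saddle $E$.} We apply the same argument to $E$ in $D_3$ with time reversed and the roles of stable and unstable exchanged. The polycycle attracts from outside, so $W^u_1(E)$ winds onto it, while $W^u_2(E)$ goes to a hyperbolic attractor $R'$ in $D_3$.

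The main obstacle is the claim in Step 2 that the disc contains no other saddles whose stable separatrices would cut it into several basins. The hypothesis only says that $I$ is the only winding saddle. I would try to resolve this by noting that the lemma concerns the standard set-up, where $I$ and $E$ are the only saddles other than $L$ and $M$ (compare Lemma~\ref{lem:2}). If additional saddles are allowed, I would instead show that any other saddle's stable separatrices have $\alpha$-limit sets that are sources or cycles, not the tear. They therefore cannot separate the two sides of $W^s(I)$ near $I$, because such separation would require a separatrix ending on the tear loop.
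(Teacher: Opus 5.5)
Your Step~2 does not prove the lemma as stated. The assertion that the only orbits in the disc not tending to an attractor are those on $W^s(I)$ fails as soon as $D_1$ contains other saddles. Their stable separatrices come from sources or repelling cycles, end at saddles, and could a priori cut the disc into several basins. Nothing excludes such saddles, since $v_0$ is only assumed structurally stable away from the polycycle. Lemma~\ref{lem:2} does not help either: it concerns which separatrices can form sparkling connections in the family, not which saddles $v_0$ has inside the tear. Your fallback, that separating the two sides of $W^s(I)$ ``would require a separatrix ending on the tear loop'', is exactly the content of the lemma, and you assert it without proof. In your write-up the uniqueness hypothesis is used only to locate $\alpha(W^s_2(I))$, whereas its real job is to exclude such separating separatrices. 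Step~3 inherits the same gap.

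The missing argument, which is the one the paper gives, runs as follows. Suppose $W^u_1(I)$ and $W^u_2(I)$ tend to distinct attractors with disjoint open basins $\mathcal B_1,\mathcal B_2$. By the local picture at the hyperbolic saddle and continuity of the flow, a one-sided collar of the whole spiral $W^s_1(I)$ lies in $\mathcal B_1$ on one side and in $\mathcal B_2$ on the other. The spiral always crosses $\Sigma$ in the same direction, so the segment $(b_{k+1},b_k)$ is adjacent to one side of the spiral at one end and to the other side at the other end. By connectedness it therefore contains a boundary point $p$ of $\mathcal B_1$. Basins of hyperbolic attractors are open and the tear repels from inside, so Poincar\'e--Bendixson and structural stability force $p$ to lie on a stable separatrix of some saddle. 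Its backward orbit tends to the tear, because the reversed-time monodromy contracts toward the loop. Since $p\notin W^s_1(I)$, this is a second separatrix winding onto the loop in reversed time (possibly $W^s_2(I)$), which is a contradiction.

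Your Step~1 remark about the unstable separatrices of $I$ ``entering the strip structure'' also points the wrong way. In forward time, orbits leave a neighborhood of the repelling tear. What enters every strip are the basins, via the backward flow of these collars.
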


\proof Suppose, to the contrary, that the two unstable separatrices of the saddle $I$ tend to distinct attractors. It follows that the winding stable separatrix of~$I$ is contained in the common boundary of their basins of attraction, which are open. Consider a strip between the $n$-th and $(n+1)$-st turns of the separatrix near the loop. Within this strip, both basins are present and, therefore, must be separated by another separatrix. Since $n$ was arbitrary, this `new' separatrix is winding towards the loop. This contradicts the assumption that there is a unique winding separatrix inside the loop. Hence, both separatrices must tend to the same attractor.
The argument for the saddle $E$ is analogous.
\qed

\subsection{The Proof of Theorem~\ref{thm:main}}    

In order to prove Theorem~\ref{thm:main}, we need to establish that the LMF graphs of the two families are isotopic for all values of~$\varepsilon$. First, we treat the case $\varepsilon< 0$. Here, an attracting hyperbolic limit cycle appears outside the polycycle, whereas a repelling one emerges inside the loop (see, e.g., \cite[p.~254, Fig.~143]{ALGM}). The two unstable separatrices (namely, those of $L$ and $E$) tend to the attracting cycle. Similarly, the stable separatrices of $L$ and $I$ spiral away from the repelling one.

Combined with the structural stability of the `outer' part of the vector fields in the family, this implies that the LMF graphs corresponding to $\varepsilon<0$ are isotopic. Therefore, Fedorov’s theorem establishes the weak equivalence of the families restricted to $\{\varepsilon \leqslant 0\}$, where the identity (or, in fact, any) homeomorphism of the base $\{\varepsilon \leqslant 0\}$ can be taken as the parameter mapping.

It remains to treat the case $\varepsilon>0$. According to Lemma~\ref{proper}, $LE$ and $LI$ connections are order-equivalent for Diophantine families with $A=\tilde A$ and $\tau=\tilde \tau\pmod {1,A}$. Hence, by Lemma~\ref{lem:3}, the collections of $LE$, $LI$, and $EI$ connections are also order-equivalent for the two families. Let $h$ denote the homeomorphism that realizes this equivalence of the bases $\{\varepsilon \geqslant 0\}$. 

First, assume that no saddle connection occurs for $v_{\varepsilon}$. In this subcase, the unstable separatrices of $L$ and $E$ tend to the attractor of the unstable separatrices of $I$ by continuity (note that this attractor is unique according to Lemma 4, and the basin of attraction is open due to hyperbolicity). Similarly, the stable separatrices of $L$ and $I$ tend to the repellor of the stable separatrices of $E$ under time reversal. Note that all other vertices and edges of the LMF graph arise from structurally stable components of the vector field $v_0$. Consequently, all LMF graphs in this subcase are isotopic, and the vector fields $v_{\varepsilon}$ and $\tilde {v}_{h(\varepsilon)}$ are topologically equivalent.

Conversely, suppose that there exists a connection for $v_\varepsilon$. By Lemma~\ref{lem:2}, it must be an $EI$, $LE$, or $LI$ connection. Then (by the construction of $h$) a connection of the same type also occurs for $\tilde v_{h(\varepsilon)}$. Consequently, the LMF graph for this subcase is obtained from the LMF graph of the previous subcase via a graph surgery that is identical for both fields $v_\varepsilon$ and $\tilde v_{h(\varepsilon)}$. Therefore, the LMF graphs remain isotopic for $v_\varepsilon$ and $\tilde v_{h(\varepsilon)}$ as well.

Since all cases have been exhausted, the proof is complete.

\section{Proof of Lemma~\ref{lem:3}}\label{sec:L3_proof}

\subsection{An explicit description of the ordering}

We now provide the proof of Lemma~\ref{lem:3}, which was deferred from the previous section. Recall that Lemma~\ref{lem:3} states that if the $LE$ and $LI$ connections of two families are order-equivalent, then so is the collection of all $LE$, $LI$, and $EI$ connections.

The following proposition provides an explicit description of the arrangement of $EI$ connections.

\begin{proposition} \label{prop:2} The bifurcation parameters corresponding to LE and LI connections partition the parameter line into a sequence of intervals. For sufficiently small $\varepsilon>0$, any such open interval contains exactly one EI connection, regardless of the types of connections at its endpoints.
\end{proposition}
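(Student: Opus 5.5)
We will reduce everything to the relative position of three separatrices on one transversal segment. Fix a transversal segment $\Gamma$ crossing the broken ``tear'' loop near $L$, with a coordinate $x$ for which the following holds. For $\varepsilon>0$ the unstable separatrix of $L$ that used to form the loop now makes finitely many turns and then exits. Its successive passages through $\Gamma$ are governed by the standard monodromy maps $x\mapsto C_1x^\lambda$ (inside) and $x\mapsto C_2^{-1}x^{\lambda^2\mu}$ (outside), corrected by the shift $\varepsilon$. We track three quantities, all as functions of $\varepsilon$:
\begin{itemize}
\item $\ell(\varepsilon)$, the position of the returning separatrix of $L$ relative to the stable separatrix of $L$;
\item $\iota(\varepsilon)$, the intersection of the stable separatrix of $I$ with $\Gamma$, tracked through its windings;
\item $\eta(\varepsilon)$, the intersection of the unstable separatrix of $E$ with $\Gamma$, tracked through its windings.
\end{itemize}
An $LI$ connection occurs when $\ell$ hits one of the points $\iota_m$ (the $m$-th turn of the $I$-separatrix). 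An $LE$ connection occurs when an $E$-turn $\eta_n$ hits the stable separatrix of $L$. An $EI$ connection occurs when some $\eta$-point coincides with some $\iota$-point.

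The key observation is that the connections alternate. On $\Gamma$, the intersection of the stable separatrix of $L$ (or its preimage under the Poincaré map) separates the turns of $I$ from the turns of $E$. Between two consecutive bifurcation values of $LI$ or $LE$ type, the combinatorics of which ``sector'' of the broken loop each separatrix enters are constant. The plan is as follows.

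First, using the asymptotics from \cite{IMSh} (Lemma~3.1), we write the $LI$ values $i_m$ and the $LE$ values $e_n$ in the chart $\ln(-\ln\varepsilon)$. By irrationality of $A$ and Lemma~\ref{proper}, these are distinct for small $\varepsilon$.

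Second, on each open interval $(a,b)$ between consecutive such values, we track the unstable separatrix of $E$ and the stable separatrix of $I$. At a parameter where $LE$ occurs, the $E$-separatrix switches from passing on one side of the saddle $L$ to the other: it goes from the region where it ultimately meets the inner region to the outer one, or it changes its number of turns by one. An $LI$ parameter does the analogous thing for $I$. Hence at each endpoint, exactly one of the two separatrices changes its combinatorial ``address'', meaning the number of turns and the side of $L$. We then consider the continuous function $\delta(\varepsilon)=\eta(\varepsilon)-\iota(\varepsilon)$, measured on a fundamental domain of the Poincaré map near $L$. We show that it has opposite signs near the two endpoints, whatever type each endpoint is. For an endpoint of $LE$ type, $\eta$ lies on the separatrix of $L$ while $\iota$ does not. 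This fixes the sign, and the sign flips in the appropriate way for an $LI$ endpoint. The intermediate value theorem then gives at least one $EI$ connection in $(a,b)$.

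Third, uniqueness follows from monotonicity. In the logarithmic chart, $\eta$ and $\iota$ move with asymptotically different speeds: the exponents $\lambda^2\mu>1$ and $\lambda<1$ act in opposite directions. So $\delta$ is strictly monotone on the interval for $\varepsilon$ small. This uses the $C^1$-conjugacy to the standard maps from \cite{IMSh} and the ``non-zero speed'' condition.

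The main obstacle is the sign bookkeeping at the endpoints, namely checking that the sign change holds in all four endpoint-type combinations ($LI$–$LI$, $LI$–$LE$, $LE$–$LI$, $LE$–$LE$). I would first handle this via the cyclic order on $\Gamma$ of the points $\iota_m$, $\eta_n$ and the separatrix of $L$. Each bifurcation is a single transposition in this order, and $EI$ connections are exactly the transpositions of an $\eta$ with an $\iota$. Counting transpositions between consecutive $L$-events should give exactly one.
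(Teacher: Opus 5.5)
Your architecture matches the paper's: on each interval between consecutive $LE$/$LI$ values, existence comes from the intermediate value theorem and uniqueness from strict monotonicity of a difference function. However, the monotonicity, which is the real content of the proof, is only asserted, so the uniqueness step has a genuine gap.

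\textbf{The monotonicity step.} ``Asymptotically different speeds'' and ``the exponents act in opposite directions'' give no sign for $\delta'$. The $C^1$-conjugacy of \cite{IMSh} concerns only the unperturbed monodromy at $\varepsilon=0$. It says nothing about $\varepsilon$-derivatives of the perturbed maps, which are not literally ``standard map plus shift''. The difficulty is that the separatrix point is the $n$-th iterate, with $n\to\infty$ as $\varepsilon\to 0$, so you need a bound on $\partial_\varepsilon\mathcal F^n_\varepsilon(B_2(\varepsilon))$ that is uniform in $n$.

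The paper obtains this bound as follows, working in the natural chart $x$ with $x=0$ on the stable separatrix of $L$.
\begin{itemize}
\item $F_\varepsilon(0)=-\rho(\varepsilon)$.
\item $F_\varepsilon$ is $\delta$-contracting near $(0,0)$, and $\partial_\varepsilon F_\varepsilon(x)=-\rho'(0)+o(1)$; both facts come from \cite{IKS}, and $\rho'>0$ is the non-zero speed condition.
\item Iterating the chain rule, the bounds on $\partial_\varepsilon F^{r+j}_\varepsilon(B_2(\varepsilon))$ follow the affine map $u\mapsto -K+\delta u$, whose attracting fixed point is negative. This yields $\partial_\varepsilon F^n_\varepsilon(B_2(\varepsilon))<-\tfrac34\rho'$.
\item The same estimate holds for the reversed-time loop monodromy $G_\varepsilon$ in the chart $y=-\rho-x$.
\end{itemize}
Thus both maps contract (the outer one in forward time, the inner one in reversed time). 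Each separatrix point therefore moves essentially with ``its'' endpoint of the opening gap, and these endpoints separate. As a result, $d_{n,k}=y(\mathcal G^k_\varepsilon B_1)+x(\mathcal F^n_\varepsilon B_2)+\rho$ satisfies $d_{n,k}'<-\rho'/2$. Without this uniform derivative control, nothing prevents the difference from oscillating inside an interval.

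\textbf{The endpoint signs.} What you call the main obstacle is straightforward once you compare the points on the gap, which each separatrix crosses exactly once, rather than on an unspecified fundamental domain. Because $F_\varepsilon(0)=-\rho$ and the turn count decreases as $\varepsilon$ grows (by the same derivative estimate), the two points sweep the gap in opposite directions:
\begin{itemize}
\item between consecutive $LE$ values, the $E$-point sweeps the gap from $x=0$ to $x=-\rho$, that is, from $y=-\rho$ to $y=0$;
\item between consecutive $LI$ values, the $I$-point sweeps it from $y=0$ to $y=-\rho$.
\end{itemize}
Hence, with $\varepsilon$ increasing, $d_{n,k}>0$ near every left endpoint and $d_{n,k}<0$ near every right endpoint, whatever the endpoint types. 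All four combinations are settled at once. Your transposition count that ``should give exactly one'' merely restates the proposition.
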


With this proposition in hand, we can prove Lemma~\ref{lem:3}.

\proof[of Lemma 3]
By the hypothesis of the lemma, we are given a homeomorphism of the parameter line that maps $LE$ connections to $LE$ connections and $LI$ connections to $LI$ connections. Consequently, it maps the open intervals between them to the corresponding intervals in the image. By Proposition 2, each such interval — both in the domain and the image — contains exactly one $EI$ connection. It is then a simple matter to modify the homeomorphism on each interval, keeping the endpoints fixed, such that the $EI$ connection of the domain is mapped to the $EI$ connection of the image. This adjusted homeomorphism yields the desired order equivalence, concluding the proof.
\qed

\subsection{Proof of Proposition~\ref{prop:2}}
 Let $\Gamma$ be a transversal section passing through a point of the saddle loop of the `tears of the heart' polycycle. The transversal $\Gamma$ does not depend on the parameter~$\varepsilon$. When the loop is unfolded for $\varepsilon > 0$, a segment between the first points  of intersection of $\Gamma$ with the stable and the unstable separatrices of $L$ appears on $\Gamma$. We refer to this segment as \emph{`the gap'}. Note that for $\varepsilon > 0$ the unstable separatrix of the saddle $E$ intersects the gap at exactly one point, and the same is true for the stable separatrix of the saddle~$I$; moreover, an $EI$ connection occurs if and only if these two points coincide.

We will consider the point where a separatrix of $E$ enters the gap in a parameter-dependent natural chart on $\Gamma$, that is, a chart where $x=0$ corresponds to the intersection with the stable separatrix of~$L$. This point is obtained via iterating the monodromy map $\mathcal{F}_\varepsilon$ of the perturbed `tears of the heart' polycycle. An analogous procedure for the entry point of the interior saddle~$I$ will require another natural chart, the one where $y = 0$ corresponds to the point of the unstable separatrix of~$L$, and the monodromy along the perturbed saddle loop will be considered in the reversed time and denoted~$\mathcal{G}_\varepsilon$. Since we are free to choose the natural coordinates that will be used in the argument, we will assume that they are related by the following coordinate change formula:
$$
y = -\rho(\varepsilon)-x.
$$
Here $\rho(\varepsilon) \ge 0$ is the size of the gap when measured in any of our two natural coordinates. Note that for both coordinates $x$ and $y$, the gap is at the negative semi-axis, see Fig.~\ref{fig:gap}.

\begin{figure}[!ht]
\begin{center}
\includegraphics{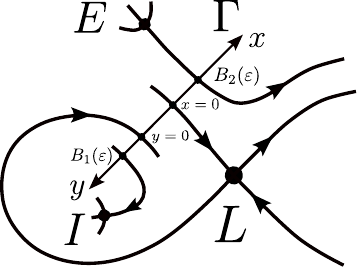}
\caption{The gap and the coordinates $x$ and~$y$ at the transversal $\Gamma$.} \label{fig:gap}
\end{center}
 \end{figure}

Assume that for a particular value of $\varepsilon$ a segment of the unstable separatrix of $E$ that starts at $B_{2}(\varepsilon)$ makes $n$ winds around the polycycle before it reaches the gap. Similarly, assume that the stable separatrix of $I$ makes $k$ turns around the perturbed loop when it goes from~$B_{1}(\varepsilon)$ to the gap in the reversed time. Recall that $\mathcal{F}_\varepsilon$ is the monodromy map of the perturbed `tears of the heart' in the positive time and $\mathcal{G}_\varepsilon$ is the monodromy map of the perturbed loop in the reversed time. Each map is defined on a parameter dependent semitransversal contained in $\Gamma$. The points where the separatrices of $E$ and $I$ enter the gap are $\mathcal{F}^n_\varepsilon(B_2(\varepsilon))$ and $\mathcal{G}^k_\varepsilon(B_1(\varepsilon))$, respectively. We consider the difference between the positions of these two points expressed in the $y$-coordinate:
\begin{equation}\label{eq:d}
d_{n,k}(\varepsilon) = y(\mathcal{G}^k_{\varepsilon} (B_1(\varepsilon))) - y(\mathcal{F}^n_{\varepsilon}(B_2(\varepsilon))) = y(\mathcal{G}^k_{\varepsilon} (B_1(\varepsilon))) + x(\mathcal{F}^n_{\varepsilon}(B_2(\varepsilon))) + \rho(\varepsilon).
\end{equation}

\begin{proposition} The function $d_{n,k}(\varepsilon)$ is monotonically decreasing provided $n$ and $k$ are sufficiently large.
\end{proposition}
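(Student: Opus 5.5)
We show that each of the three summands in \eqref{eq:d} is monotone in $\varepsilon$ with the appropriate sign, and that for large $n,k$ the single possibly non-monotone contribution is dominated by the term $\rho(\varepsilon)$, or by a term with the right sign.

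\textbf{Standard forms of the monodromies.} We work in the natural charts already fixed above. By \cite{IMSh}, in these charts the monodromies $\mathcal F_\varepsilon$ and $\mathcal G_\varepsilon$ are $C^1$-close to the standard maps. The map along the perturbed heart in positive time is the Dulac map composed with a shift by the gap. The map along the loop in reversed time is the inverse Dulac map composed with a shift by the gap. Schematically,
$$
\mathcal F_\varepsilon(x)\approx C_2^{-1}x^{\lambda^2\mu}-\rho(\varepsilon),\qquad
\mathcal G_\varepsilon(y)\approx C_1 y^{\lambda}-\rho(\varepsilon),
$$
written in the coordinates where the gap sits on the negative semi-axis. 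The perturbed loop is broken with $\rho(\varepsilon)$ increasing, and we may assume $\rho'(\varepsilon)>0$ by a choice of parameter, as allowed by the `non-zero speed' condition.

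\textbf{Monotonicity of each summand.} The orbit $\mathcal F^j_\varepsilon(B_2)$, $j<n$, stays on the positive side, and at the $n$-th step it lands in the gap. Differentiating $x_n=\mathcal F_\varepsilon(x_{n-1})$ in $\varepsilon$ gives
$$
\partial_\varepsilon x_n=\mathcal F'(x_{n-1})\,\partial_\varepsilon x_{n-1}-\rho'(\varepsilon)+(\text{small}).
$$
Since the outer map is contracting near the polycycle ($\lambda^2\mu>1$, so $\mathcal F'\to0$), this recursion gives $\partial_\varepsilon x_n=-\rho'(\varepsilon)(1+o(1))$ as $n\to\infty$, with errors exponentially small in $n$. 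The same argument applies to $\mathcal G$, which is also contracting because $\lambda<1$, so $y_k=\mathcal G^k_\varepsilon(B_1)$ satisfies $\partial_\varepsilon y_k=-\rho'(\varepsilon)(1+o(1))$. Combining the three summands,
$$
d_{n,k}'(\varepsilon)=-\rho'-\rho'+\rho'+o(\rho')=-\rho'(\varepsilon)(1+o(1))<0 .
$$

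\textbf{Main obstacle.} The difficulty is controlling the $o(1)$ terms uniformly in $\varepsilon$. Two issues arise:
\begin{itemize}
\item The derivative $\mathcal F'$ at the last pre-gap point need not be small, since that point can be close to $0$ only at scale $\rho$. Here contraction is actually stronger, because $x^{\lambda^2\mu}$ is flat at $0$.
\item The $\varepsilon$-dependence of the remainders in the normal forms of \cite{IMSh}, and of $B_i(\varepsilon)$ and $C_i(\varepsilon)$, must be bounded in terms of $\rho'(\varepsilon)$.
\end{itemize}
My first approach is to use the $C^1$-in-parameter version of the normal form from \cite{IMSh}. From it I would extract bounds of the form $|\partial_\varepsilon(\text{remainder})|\le c\,\rho'(\varepsilon)$ on the first few iterates, and then let the geometric decay of the products $\prod\mathcal F'$ and $\prod\mathcal G'$ absorb these errors once $n$ and $k$ exceed a fixed threshold.

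If this fails for the inner loop, the fallback uses that the $y$-image of the final $\mathcal G$-step has derivative $-\rho'$ exactly in the chosen chart. The previous iterates then enter only through $\mathcal G'(y_{k-1})\,\partial_\varepsilon y_{k-1}$, and this product is small because $y_{k-1}$ is near $0$, where $(y^\lambda)'$ is large but multiplies a quantity of order $\rho'$ times an exponentially small factor. Checking this balance is the delicate computation.
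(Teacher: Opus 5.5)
Your skeleton is the paper's. You differentiate the orbit $x_j=F_\varepsilon(x_{j-1})$ in $\varepsilon$, let contraction erase the influence of the first iterates, and use that each step near the polycycle contributes $\approx-\rho'$. Then both orbit terms in \eqref{eq:d} have derivative $\approx-\rho'$ and $d_{n,k}'\approx-\rho'<0$; the paper proves $<-\tfrac34\rho'$ for each term and gets $d_{n,k}'<-\rho'/2$.

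The gap is that what you call the main obstacle is the entire analytic content, and you leave it open. Your recursion silently uses two uniform facts, which the paper imports from \cite{IKS} (Lemma~5, Corollaries~2 and~3):
(i) for every $\delta\in(0,1)$, $F_\varepsilon$ is $\delta$-contractive on a small neighbourhood $R$ of $(\varepsilon,x)=(0,0)$;
(ii) $\partial_\varepsilon F_\varepsilon(x)=-\rho'(0)+o(1)$ as $(\varepsilon,x)\to(0,0)$.
Fact (ii) is anchored by the exact identity $F_\varepsilon(0)=-\rho(\varepsilon)$: the monodromy sends one endpoint of the gap to the other, so $\partial_\varepsilon F_\varepsilon(0)=-\rho'(\varepsilon)$ holds exactly, not just up to remainders.

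Your planned bounds $|\partial_\varepsilon(\text{remainder})|\le c\,\rho'$ only cover the finitely many iterates before the orbit enters $R$; that is the paper's constant $M$. On the tail, the per-step errors $e_j=\partial_\varepsilon F_\varepsilon(x_j)+\rho'$ accumulate to roughly $\sum_j\delta^j e_j$. They must be \emph{small}, not merely $O(\rho')$, which is exactly (ii), and your ``(small)'' assumes it without justification. For the same reason the error is $o(1)$, not exponentially small in $n$. Given (i) and (ii), the paper closes the argument with the affine majorant $u\mapsto-K+\delta u$, $K=\tfrac{9}{10}\rho'(0)$, using $\partial_xF_\varepsilon>0$ so that a negative bound stays negative.

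Separately, your model for $\mathcal G_\varepsilon$ is wrong. Since $\lambda<1$, the forward monodromy inside the loop, $y\mapsto C_1y^\lambda$, is expanding. The reversed-time map $\mathcal G_\varepsilon$ is its inverse, $\approx(y/C_1)^{1/\lambda}-\rho(\varepsilon)$, with exponent $1/\lambda>1$. With $C_1y^\lambda-\rho$ as you wrote it, three things go wrong:
(a) $\mathcal G_\varepsilon$ would be expanding near $0$;
(b) the stable separatrix of $I$ could not wind onto the loop in reversed time;
(c) your claim that $\mathcal G$ is contracting ``because $\lambda<1$'' would contradict your own formula.

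Your fallback is built on this mistake and also fails on its own terms. The quantity $\partial_\varepsilon y_{k-1}=\partial_\varepsilon\mathcal G_\varepsilon(y_{k-2})+\mathcal G_\varepsilon'(y_{k-2})\,\partial_\varepsilon y_{k-2}$ already contains the term $\approx-\rho'$ from the previous step. It is therefore of order $\rho'$, not exponentially small, so a large $\mathcal G_\varepsilon'(y_{k-1})$ would destroy the estimate rather than be balanced. With the correct exponent no balancing is needed: $\mathcal G_\varepsilon'(y_{k-1})\to0$ just as for $F_\varepsilon$, and the $\mathcal G$-term is handled exactly like the $F$-term.
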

\proof
We denote by $F_\varepsilon$ the map $\mathcal{F}_\varepsilon$ written in the $x$-chart, and by $G_\varepsilon$ the map $\mathcal{G}_\varepsilon$ written in the $y$-chart. With a slight abuse of notation, below we write $F_\varepsilon(B_2(\varepsilon))$ instead of $F_\varepsilon(x(B_2(\varepsilon)))$ and $G_\varepsilon(B_1(\varepsilon))$ instead of $G_\varepsilon(y(B_1(\varepsilon)))$, making the transition to the corresponding chart implicit.

We will show that the derivative of $F_{\varepsilon}^{n}(B_{2}(\varepsilon))$ in the natural chart satisfies an estimate
\begin{equation}
\partial_{\varepsilon}F_{\varepsilon}^{n}(B_{2}(\varepsilon)) < -\frac{3}{4}\rho^{\prime}(\varepsilon) < 0.
\label{est}
\end{equation}
The same estimate for $G_{\varepsilon}^{k}(B_{1}(\varepsilon))$ is obtained analogously. This immediately implies that
$$
d_{n,k}'(\varepsilon) = \partial_{\varepsilon}G_{\varepsilon}^{k}(B_{1}(\varepsilon)) +
\partial_{\varepsilon}F_{\varepsilon}^{n}(B_{2}(\varepsilon))+\rho^{\prime}(\varepsilon)<
-\rho^{\prime}(\varepsilon)/2 < 0,
$$
which establishes the monotonicity.

We now proceed to the proof of estimate~\eqref{est}. First, note that $F_{\varepsilon}(0)= -\rho(\varepsilon)$: the monodromy takes one endpoint of the gap into the other, and the gap appears on the negative semi-axis w.r.t the chosen natural coordinates. Second, we will need two properties of the map $F$ that follow from Lemma 5 and Corollaries 2 and 3 of \cite{IKS} (see also\footnote{Note that, in contrast to \cite{IKS}, where the authors take $\rho (\varepsilon)$ as a new parameter, we perform our analysis in terms of the original~$\varepsilon$. The reparameterization in \cite{IKS} is valid precisely due to the non-zero speed condition mentioned in the first footnote, which ensures that $\rho^{\prime }(\varepsilon)>0$.} the proof of Lemma~6 in~\cite{IKS}):
\begin{itemize}
    \item For any $\delta \in (0,1)$, for sufficiently small $|\varepsilon|$ and $|x|$, the map $F_\varepsilon(\cdot)$ is $\delta$-contractive;
  \item The derivative $\partial_{\varepsilon}F_\varepsilon(x)$ is  $\partial_{\varepsilon}F_{\varepsilon}(0)|_{\varepsilon = 0}+o(1)$  as  $(\varepsilon,x) \to (0,0)$, where $\partial_{\varepsilon}F_{\varepsilon}(0)=-\rho^{\prime}(\varepsilon)<0$.
\end{itemize}

Let  $R$ be a small rectangular neighborhood of the origin in the $(\varepsilon, x)$-plane where we have that $\partial_{\varepsilon}F_{\varepsilon}(x) < -\frac{9}{10}\rho'(0)$ and each $F_\varepsilon$ is $\delta$-contractive with $\delta ={1}{/10}$. Denote $K = \frac{9}{10}\rho'(0)$.

For sufficiently small $\varepsilon$ the total number of winds $n$ is large, whereas the number of winds it takes to get from $B_2(\varepsilon)$ to $R$ is bounded. Hence, we can fix $r \in \mathbb{N}$ such that $F^r_\varepsilon(B_2(\varepsilon))$ and all subsequent points of the orbit up to $F_{\varepsilon}^{n-1}(B_2(\varepsilon))$ are in $R$ and we can decompose $n = t + r$ and $F^{n}_\varepsilon = F^t_\varepsilon \circ F^r_\varepsilon$. Since $F_{\varepsilon}^{r}(x)$ is smooth with respect to $(x,\varepsilon)$ in a neighborhood of $(B_2(\varepsilon), 0)$, the expression $|\partial_{\varepsilon}F_{\varepsilon}^{r}(B_2(\varepsilon))|$ is uniformly bounded, say, by a constant $M>0$.

By the chain rule,
$$
\partial_\varepsilon F_{\varepsilon}^{r+1}(B_2(\varepsilon))=\partial_\varepsilon F_\varepsilon+\partial_x F_\varepsilon\cdot\partial_\varepsilon F_{\varepsilon}^{r}(B_2(\varepsilon)) < -K + \delta M.
$$
We omit the argument $F_{\varepsilon}^{r}(B_2(\varepsilon))$ of the derivatives of $F_\varepsilon$ for brevity.
For the next iterate, we analogously have
\[
\partial_\varepsilon F_{\varepsilon}^{r+2}(B_2(\varepsilon))=\partial_\varepsilon F_\varepsilon+\partial_x F_\varepsilon\cdot\partial_\varepsilon F_{\varepsilon}^{r+1}(B_2(\varepsilon)).
\]
If the estimate $a_1 = -K + \delta M$ on $\partial_\varepsilon F_{\varepsilon}^{r+1}(B_2(\varepsilon))$ is negative, the second term is negative and we can write
\[
\partial_\varepsilon F_{\varepsilon}^{r+2}(B_2(\varepsilon)) < -K.
\]
If $a_1$ is positive, we have an estimate
\[
\partial_\varepsilon F_{\varepsilon}^{r+2}(B_2(\varepsilon)) < -K + \delta a_1 = a_2.
\]
Let $(a_j)_{j\ge 1}$ be the positive semi-orbit of $M$ for the affine map $\xi\colon u \mapsto -K + \delta\cdot u$. Arguing inductively, we obtain that while $a_{j-1} > 0$, the point $a_j$ serves as an estimate from above on the corresponding derivative:
\[
\partial_\varepsilon F_{\varepsilon}^{r+j}(B_2(\varepsilon)) < a_j.
\]
The affine map $\xi$ has an attracting fixed point at $\frac{-K}{1-\delta}$. Since $K = \frac{9}{10}\rho'(0)$ and $\delta = \frac{1}{10}$, we have $\frac{-K}{1-\delta} = -\rho'(0) < 0$. Hence, we eventually obtain the first $j$ such that $a_{j-1} < 0$. Then, as above, for all $i \ge j$ we get an estimate
\[
\partial_\varepsilon F_{\varepsilon}^{r+i}(B_2(\varepsilon)) < -K.
\]
This holds, in particular, for $i = t$, which implies the desired estimate 
$$
\partial_\varepsilon F_\varepsilon^n(B_2(\varepsilon)) < -\frac{3}{4}\rho'(\varepsilon).
$$
\qed

 Let us return to the proof of Proposition~\ref{prop:2}. As mentioned above, in the $\ln (-\ln(\varepsilon))$ chart, the parameter values of the $LE$ and $LI$ connections form two perturbed arithmetic progressions (where the perturbation decays exponentially as the parameter $\ln(-\ln (\varepsilon))$ increases; see \cite{IMSh}). Let $a_{n}$ denote the progression with the smaller common difference, and $b_{n}$ the one with the larger common difference.\footnote{Note that the common differences of the progressions cannot coincide since $A\notin \mathbb Q$.} One of the functions $G_{\varepsilon}^{n}(B_{1}(\varepsilon))$ and $F_{\varepsilon}^{n}(B_{2}(\varepsilon))$ is continuous on $[a_{n},a_{n+1})$, while the other is continuous on $[b_{n},b_{n+1})$. Let us consider the case where the $EL$ connection values $(e_n)$ play the role of $(a_n)$; the opposite case is analogous.

\begin{figure}[!ht]
\begin{center}
\includegraphics{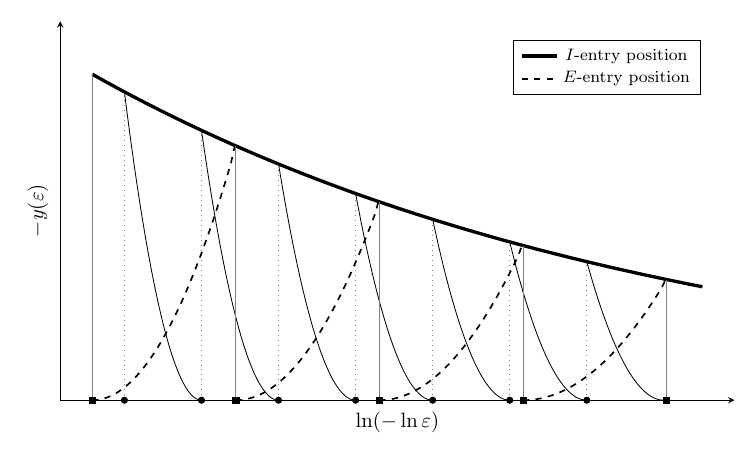}
\caption{Qualitative plots of the points where the $I$- and $E$-separatrices reach the gap.  Due to the monotonicity of the difference $d_{n,k}$, the intersection points (which correspond to $EI$ connections) are positioned as described by Proposition~\ref{prop:2}.} \label{fig:trap}
\end{center}
 \end{figure} 
 
Consider a curvilinear trapezoid with a horizontal base $[e_j, e_{j+1}]$ at the $\ln(-\ln\varepsilon)$-axis, two vertical sides, and with upper boundary being an arc of the graph of the gap length as a function of $\ln(-\ln\varepsilon)$. If $[e_j, e_{j+1})$ contains no points of the sequence $(i_{n})$, it is contained in some $[i_k, i_{k+1})$, and the graph of $F^j_\varepsilon(B_2(\varepsilon))$ connects the opposite corners of the trapezoid while the graph of $G^k_\varepsilon(B_1(\varepsilon))$ connects some points on the two vertical sides. Then the difference $d_{j,k}(\varepsilon)$ from~\eqref{eq:d} admits at least one zero by the intermediate value theorem and has at most one zero by the monotonicity of $d_{j,k}$. This zero corresponds to an $EI$ connection. If the interval $[e_j, e_{j+1})$ contains an element of $(i_n)$, such an element $i_k$ is unique, provided that $\varepsilon$ is sufficiently small. The line $\{\ln(-\ln \varepsilon)=i_k\}$ partitions the trapezoid into two sub-regions, where the same reasoning as above can be applied, see Fig.~\ref{fig:trap}. This completes the proof of Proposition~\ref{prop:2}.

\section{Conclusion}

The Main Theorem and the results of \cite{IMSh} establish a gap between the topological and metric approaches to the classification of generic families of vector fields. This discrepancy is shown to be rooted in the surprising arithmetical nature of the problem, reflecting the fundamental contrast between the small-denominator-like properties inherent to Liouville-type parameters and the regularity characteristic of metrically typical ones.

The foregoing suggests that topologically generic vector fields with ``tears of the heart'' may give rise to new series of numerical invariants, whereas the metrically generic  Diophantine case holds the promise of a complete classification. One should keep in mind, however, that even in the latter setting, combinatorial invariants may emerge. For instance, any answer to the question of whether inequivalent links or knots arise within the bifurcation diagrams of the full three-parameter families with `tears of the heart' would be of considerable interest.


\medskip
\noindent
{\large \bf Yu. S. Ilyashenko,\\}
HSE University, Moscow, Russia\\
E-mail: \texttt{\tt yulijs@gmail.com}

\medskip
\noindent
{\large \bf Stanislav Minkov,\\}
Brook Institute of Electronic Control Machines, Moscow, Russia\\
E-mail: \texttt{stanislav.minkov@yandex.ru}

\medskip
\noindent
{\large \bf Ivan Shilin,\\}
HSE University, Moscow, Russia\\
E-mail: \texttt{i.s.shilin@yandex.ru}

\end{document}